\newcommand{\qed}{\hspace*{\fill} $\Box$}
\newtheorem{thm}{Theorem}[section]
\newtheorem{prop}[thm]{Proposition}
\numberwithin{equation}{section}
\begin{document} 
  
\title{On the number of unlabeled vertices in edge-friendly labelings of graphs}

\author{
Elliot Krop\thanks{Department of Mathematics, Clayton State University, ({\tt ElliotKrop@clayton.edu})}
\and
Sin-Min Lee\thanks{Department of Computer Science, San Jose State University, ({\tt lee.sinmin35@gmail.com})}
\and 
Christopher Raridan\thanks{Department of Mathematics, Clayton State University, ({\tt ChristopherRaridan@clayton.edu}).}}

\date{\today}

\maketitle

\begin {abstract}
  Let $G$ be a graph with vertex set $V(G)$ and edge set $E(G)$, and $f$ be a $0-1$ labeling of $E(G)$ so that the absolute difference in the number of edges labeled $1$ and $0$ is no more than one. Call such a labeling $f$ \emph{edge-friendly}. We say an edge-friendly labeling induces a \emph{partial vertex labeling} if vertices which are incident to more edges labeled $1$ than $0$, are labeled $1$, and vertices which are incident to more edges labeled $0$ than $1$, are labeled $0$. Vertices that are incident to an equal number of edges of both labels we call \emph{unlabeled}.
Call a procedure on a labeled graph a \emph{label switching algorithm} if it consists of pairwise switches of labels. Given an edge-friendly labeling of $K_n$, we show a label switching algorithm producing an edge-friendly relabeling of $K_n$ such that all the vertices are labeled. We call such a labeling \textit{opinionated}.
\\[\baselineskip] 
2000 Mathematics Subject Classification: 05C78
\\[\baselineskip]
Keywords: Edge labeling, edge-friendly labeling, edge-balanced labeling, opinionated labeling, opinionated graph
\end {abstract}

\section{Introduction}
 
\subsection{Definitions}
 
For basic graph theoretic notation and definitions see Diestel~\cite{Diest}. All graphs $G(V,E)$ are finite, simple, undirected graphs with vertex set $V$ and edge set $E$. A \textit{labeling} of a graph $G$ with $H \subseteq V \cup E$ is a function $f : H \rightarrow A$ for some set $A$, and if $A = \mathbb{Z}_2 = \{ 0,1 \}$, then the labeling is called \textit{binary}. If $H=E$ ($H=V$), then the labeling $f$ is called an \textit{edge labeling} (\textit{vertex labeling}). Let $i \in \mathbb{Z}_2$. For an edge labeling $f$ of $G$, $f(uv)$ denotes the label on edge $uv$ in $G$. If $f(uv)=i$, we call the edge an $i$\textit{-edge}. The cardinality of $\{ uv \in E : f(uv) = i \}$ is denoted by $e_f(i)$. An edge labeling $f$ is called an \textit{edge-friendly labeling} if $|e_f(0) - e_f(1)| \leq 1$. For any vertex $v$ in $G$, let $N_i(v) = \{ u \in V : f(uv)=i \}$. An edge-friendly labeling $f$ induces a \textit{partial vertex labeling} $f^+ : V \rightarrow \mathbb{Z}_2$ defined by $f^+(v) = 0$ if $|N_0(v)| > |N_1(v)|$; $f^+(v) = 1$ if $|N_0(v)| < |N_1(v)|$; otherwise, $f^+(v)$ is undefined and we say $v$ is \textit{unlabeled}. We say that a vertex $v$ is \textit{trusty} if changing a label on any edge incident with $v$ does not change the induced label on $v$. Let $v_f(i)$ denote the cardinality of $\{ v \in V : f^+(v) = i \}$. A graph $G$ is called an \textit{edge-balanced} graph if there is an edge-friendly labeling $f$ of $G$ satisfying $|v_f(0) - v_f(1)| \leq 1$ and \textit{strongly edge-balanced} if $v_f(0) = v_f(1)$ and $e_f(0) = e_f(1)$. 

A procedure on a labeled graph will be called a \textit{label switching algorithm} if it consists of pairwise switches of labels. Given an edge-friendly labeling of the complete graph on $n$ vertices, $K_n$, we show a label switching algorithm producing an edge-friendly relabeling of $K_n$ such that all the vertices in the induced partial vertex labeling, are labeled.
 
\subsection{History and Motivation}
 
The assignment of binary labels on substructures of graphs is a classical and essential part of the study of graphs (see~\cite{Diest}, for example). In the present context, binary labelings were popularized by Cahit~\cite{Cahit} in the form of cordial labelings as a simplification of graceful~\cite{Rosa} and harmonious labelings~\cite{GrahamSloan}. After years of unsuccessful attempts to prove the existence of graceful and harmonious labelings on trees, Cahit showed that trees are cordial and that complete graphs with at least four vertices are not cordial. Generalizing, one can say that graphs with uniformly few edges (sparse) are more likely to be cordial than graphs with uniformly many edges (dense). Our study begins with a related problem motivated by balanced vertex labelings, introduced by Lee, Liu, and Tan~\cite{LeeLiuTan}, and their extensions to balanced edge labelings by Lee and Kong~\cite{KongLeeEBI}. 

Given a graph $G$ on $n$ vertices and $i \in \{0, \dots, n\}$, does there exist an edge-friendly labeling so that the number of unlabeled vertices is $i$? We present a label switching algorithm that answers the question in the affirmative when $G=K_n$ and $i=0$. When a graph $G$ admits a binary edge labeling so that the number of induced unlabeled vertices is $0$, we say that labeling is \emph{opinionated} and that $G$ is opinionated. Which graphs are opinionated? Cycles, paths, and odd order stars are not opinionated, so unlike the case of cordial labelings, we cannot extend our result to sparse graphs such as trees or $2$-regular graphs; however, we believe that uniformly dense graphs or graphs with high enough connectivity should be opinionated.

\section{Few Labeled Vertices}
 
Chen, Huang, Lee and Liu~\cite{ChenHuangLeeLiu}, produced the following result:

\begin{thm} 
\label{evensize}
If $G$ is a simple connected graph with order $n$ and even size, then there exists an edge-friendly labeling of $G$ so that $G$ is strongly edge-balanced.
\end {thm}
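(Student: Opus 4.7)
The plan is to begin with an edge-friendly labeling $f$ of $G$ satisfying $e_f(0) = e_f(1) = m$. Such labelings exist because $|E(G)| = 2m$ is even: assign the label $1$ to any $m$ edges and the label $0$ to the remaining $m$. Among all such ``edge-balanced'' labelings I would then select one that minimizes the vertex imbalance $|v_f(1) - v_f(0)|$, and show that this minimum must be $0$, which would certify that $G$ is strongly edge-balanced.

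The minimization claim I would argue by contradiction. Assuming $v_f(1) > v_f(0)$ (the opposite case is symmetric), I would seek a pair of edges $e_0$ and $e_1$ with $f(e_0) = 0$ and $f(e_1) = 1$ whose simultaneous relabeling preserves $e_f(0) = e_f(1)$ but strictly decreases $v_f(1) - v_f(0)$. Each endpoint of $e_1$ sees its signed balance $|N_1(v)| - |N_0(v)|$ shift by $-2$, and each endpoint of $e_0$ shifts by $+2$. So the goal is to select $e_1$ incident to a ``barely labeled-$1$'' vertex, meaning signed balance equal to $1$ or $2$, so that a shift of $-2$ pushes it to unlabeled or labeled $0$; and to select $e_0$ whose endpoints are deep enough on the $0$-side, or already unlabeled, that the $+2$ shift does not move them into the labeled-$1$ class.

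The existence of the desired pair $(e_0, e_1)$ rests on two ingredients. First, the global identity $\sum_v \left( |N_1(v)| - |N_0(v)| \right) = 2 \left( e_f(1) - e_f(0) \right) = 0$ combined with $v_f(1) > v_f(0)$ forces at least one labeled-$1$ vertex to have small signed balance, providing the needed endpoint for $e_1$. Second, connectivity of $G$ lets me traverse a path from such a vertex into the labeled-$0$ region and locate a $0$-edge whose endpoints are safe. The hard part will be the case analysis when $e_0$ and $e_1$ share an endpoint, when the barely labeled-$1$ vertex is surrounded only by other barely labeled-$1$ vertices through $1$-edges, and when every available $0$-edge is incident to a barely labeled-$0$ vertex. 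In such ``stiff'' configurations I would walk further along the path guaranteed by connectivity and use the cumulative effect of the signed-balance identity to produce the required swap. The parity condition $n \equiv |U| \pmod 2$ needed to make $v_f(0) = v_f(1)$ numerically feasible should follow automatically from the identity together with the degree parities of $G$.
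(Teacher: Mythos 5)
Your approach is genuinely different from the paper's, but as written it has a false key claim and leaves the core of the argument unproven. The paper does not minimize over labelings at all: it pairs up the odd-degree vertices of $G$ through auxiliary degree-$2$ vertices to form an Eulerian graph $G^*$ of even size, labels the edges of an Eulerian tour alternately $0,1$, and deletes the auxiliary vertices; the alternation forces every even-degree vertex to be unlabeled and splits the odd-degree vertices evenly between the two labels, so the construction is strongly edge-balanced by design. Your extremal/switching strategy could in principle work, but it would need the existence of an improving swap in every non-optimal configuration, and that is exactly the part you have not supplied.

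The concrete gap is your ``first ingredient.'' The identity $\sum_v \left( |N_1(v)| - |N_0(v)| \right) = 2\left(e_f(1)-e_f(0)\right) = 0$ together with $v_f(1) > v_f(0)$ does \emph{not} force some labeled-$1$ vertex to have signed balance $1$ or $2$: it only says the positive balances and the magnitudes of the negative balances have equal sums, which is consistent with, say, three labeled-$1$ vertices of balance $+4$ and two labeled-$0$ vertices of balance $-6$. So the endpoint you need for $e_1$ may simply not exist, and your swap must then instead create a new labeled-$0$ vertex (e.g., by pushing an unlabeled vertex to balance $-2$) --- a case your plan does not address. Beyond that, the ``stiff'' configurations and the parity feasibility of reaching $v_f(0)=v_f(1)$ are exactly where the difficulty lives, and you defer both (``I would walk further along the path,'' ``should follow automatically''). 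Note also that $v_f(1)-v_f(0)$ need not be even for an arbitrary edge-balanced labeling (a triangle with a pendant edge already gives difference $1$), so the terminal step from imbalance $1$ to imbalance $0$ requires its own argument. Until the improving-swap lemma is actually proved in all cases, this is a plan rather than a proof; the Eulerian-tour construction sidesteps all of it.
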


\begin{proof}
Let $G(V,E)$ be a simple connected graph of even size. By the Handshaking Lemma, $G$ has an even number of vertices with odd degree, say $u_1$, $u_2$, \ldots, $u_{2k-1}$, $u_{2k}$, for some integer $k \geq 0$. Suppose we add $k$ new vertices to $G$--call them $v_1^*$, $v_2^*$, \ldots, $v_k^*$--and form a new graph $G^*(V^*,E^*)$ as follows: $V^* = V \cup \{ v_1^*, v_2^*, \ldots, v_k^* \}$ and $E^* = E \cup \{ u_1v_1^*, v_1^*u_2, u_3v_2^*, v_2^*u_4, \ldots, u_{2k-1}v_k^*, v_k^*u_{2k} \}$. 

Since all vertices of $G^*$ have even degree, $G^*$ is an Eulerian graph with even size. Let $C$ be an Eulerian tour of $G^*$ and label the edges of $C$ from $\mathbb{Z}_2$ alternately. In $G^*$, delete the vertices $v_1^*$, $v_2^*$, \ldots, $v_k^*$ and the edges $u_1v_1^*$, $v_1^*u_2$, $u_3v_2^*$, $v_2^*u_4$, \ldots, $u_{2k-1}v_k^*$, $v_k^*u_{2k}$. The remaining graph has an underlying structure which is isomorphic to $G$, but with a strongly edge-balanced labeling. \qed
\end{proof} 
\\[\baselineskip]
Notice that in the labeled copy of $G$ from the above proof, all the vertices of even degree are unlabeled since every time an edge was traversed along $C$ ``into'' such a vertex, an edge was traversed along $C$ ``out'' of that vertex, and the two edges had different labels by construction. Moreover, by the Handshaking Lemma, the number of vertices of odd degree is always even. Hence, the above proof implies the following result:
 
\begin{prop}
Every finite simple graph of even size admits an edge-friendly labeling which is also edge-balanced, so that all vertices but those of odd degree remain unlabeled.
\end{prop}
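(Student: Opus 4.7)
The plan is to read the proposition directly off the construction used in the proof of Theorem~\ref{evensize}, once two bookkeeping observations about the induced vertex labels are verified. I will use the notation of that proof: $G$ has $2k$ odd-degree vertices $u_1,\ldots,u_{2k}$; the auxiliary graph $G^*$ is built by adding $k$ new vertices $v_1^*,\ldots,v_k^*$ and $2k$ new edges forming paths $u_{2j-1}\,v_j^*\,u_{2j}$ for $1\le j\le k$; and $C$ is an Eulerian tour of $G^*$ whose $|E|+2k$ edges are alternately labeled $0$ and $1$. The edge-friendly conclusion on $G$ is already recorded inside the proof of Theorem~\ref{evensize}: the $2k$ auxiliary edges come in $k$ consecutive pairs along $C$ (one pair at each $v_j^*$) whose labels are opposite by alternation, so deleting them removes $k$ edges of each label, leaving $e_f(0)=e_f(1)=|E|/2$ on $G$.

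\emph{Observation 1: every vertex $v$ of even degree in $G$ is unlabeled.} This is the content of the remark preceding the proposition. Such a $v$ is not one of the $u_i$, so its incident edges in $G$ coincide with its incident edges in $G^*$. Every pair of consecutive edges of $C$ meeting at $v$ receives opposite labels by alternation, and $C$ partitions the edges at $v$ into such pairs, so the $0$-incidences and $1$-incidences at $v$ are equal and $f^+(v)$ is undefined.

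\emph{Observation 2: the odd-degree vertices split evenly, giving $v_f(0)=v_f(1)=k$.} This is strictly stronger than edge-balancedness. The key point is that each auxiliary vertex $v_j^*$ has degree $2$ and is visited exactly once by $C$, so the two edges $u_{2j-1}v_j^*$ and $v_j^*u_{2j}$ are consecutive in $C$ and therefore receive opposite labels. In $G^*$ each $u_i$ has even degree and, by the alternation argument of Observation~1 applied inside $G^*$, has equal $0$- and $1$-incidences. Deleting its single auxiliary edge therefore shifts its balance by exactly one, in a direction opposite to its partner within the same pair. Hence within $\{u_{2j-1},u_{2j}\}$ exactly one vertex is labeled $0$ and the other is labeled $1$ by $f^+$, and summing over the $k$ pairs yields the claim.

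\noindent\textbf{Main obstacle.} The only step with real content is the opposite-label observation at each $v_j^*$, which drives the pair-by-pair cancellation in Observation~2; every other step is direct bookkeeping from Theorem~\ref{evensize}. One minor gap to flag is that Theorem~\ref{evensize} assumes $G$ is connected while the proposition drops that hypothesis; the example $G = K_3\cup K_3$ (six vertices, all of even degree, total size $6$) shows that the conclusion can fail without connectedness, since no proper alternation of labels on a triangle can leave all three of its vertices unlabeled. I would therefore read the proposition as applying to connected graphs, where the argument above closes.
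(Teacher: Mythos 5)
Your proof is correct and follows the same route as the paper, which obtains the proposition directly from the Euler-tour construction of Theorem~\ref{evensize} via the alternation argument at even-degree vertices. In fact you supply two details that the paper's one-sentence justification glosses over: the pairwise cancellation at each $v_j^*$ giving $v_f(0)=v_f(1)=k$ (the paper cites only the Handshaking Lemma, which by itself does not yield edge-balancedness), and the observation that the proposition silently drops the connectedness hypothesis of Theorem~\ref{evensize} even though $K_3 \cup K_3$ shows it cannot be dropped.
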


\section{All Labeled Vertices}
 
\begin{thm} 
\label{all-labeled}
For odd integers $n \geq 7$, there exists an edge-friendly labeling of $K_n$ such that all the vertices are labeled; that is, $K_n$ is opinionated.
\end{thm}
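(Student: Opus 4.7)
The plan is an iterative label-switching reduction. Set $m = (n-1)/2$, and for an edge-friendly labeling $f$ let $U(f)$ denote the set of unlabeled vertices; note that $v \in U(f)$ exactly when $d_0(v) = d_1(v) = m$. My goal is to show that whenever $U(f) \neq \emptyset$, some pair-swap produces an edge-friendly $f'$ with $|U(f')| < |U(f)|$. Iterating from any starting labeling then terminates at an opinionated one.

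Fix $v \in U(f)$ and consider swapping a $0$-edge $vu$ with a $1$-edge $xy$ not incident to $v$. Such a swap sends $v$'s split to $(m-1,\,m+1)$ and so labels $v$. Its effect on the other endpoints is to raise $d_1(u)$ by $1$ and lower $d_1(x)$ and $d_1(y)$ by $1$. Call $w$ \emph{$0$-critical} if $d_0(w) - d_1(w) = 2$ and \emph{$1$-critical} if $d_1(w) - d_0(w) = 2$; an adjacent edge-flip can unlabel a labeled vertex $w$ only in these two states. A \emph{safe} swap at $v$ is one with $u$ not $0$-critical and $x, y$ not $1$-critical; such a swap reduces $|U|$ by at least one.

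The central task is to show that a safe swap always exists. Suppose not; then either every $u \in N_0(v)$ is $0$-critical, or every $1$-edge disjoint from $v$ meets the set $B_1$ of $1$-critical vertices. The degree identity
\[
\sum_{w \in V}\bigl(d_0(w) - d_1(w)\bigr) \;=\; 2\bigl(e_f(0) - e_f(1)\bigr) \;\in\; \{-2,\,0,\,2\}
\]
yields strong structural constraints. In the tightest case, where $N_0(v)$ is entirely $0$-critical and $N_1(v)$ is entirely $1$-critical, counting $1$-edges within and between $N_0(v)$ and $N_1(v)$ gives two linear equations in the same unknown whose parity is inconsistent when $m$ is odd (that is, when $n \equiv 3 \pmod 4$), immediately forcing a safe swap to exist. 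Partial obstructions can be ruled out by comparing a vertex cover bound from $B_1$ against $e_f(1) \approx \binom{n}{2}/2$.

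The main obstacle I foresee is the residual joint-obstruction case when $n \equiv 1 \pmod 4$, where the parity argument is unavailable and the counting is tighter. Here I would employ a two-step maneuver: first a preliminary swap away from $v$ that demotes a critical vertex off the boundary (temporarily altering $|U|$ by at most one), then the originally intended swap at $v$ (decreasing $|U|$ by at least two), for a net reduction. The hypothesis $n \geq 7$ is exactly what guarantees enough vertices outside $\{v\} \cup N_0(v)$ to carry out this preliminary swap, which is consistent with smaller odd $n$ being excluded from the statement.
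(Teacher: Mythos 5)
Your framework---label one unlabeled vertex $v$ at a time by swapping a $0$-edge at $v$ with a $1$-edge away from $v$, chosen so that no labeled vertex is pushed back to balance---is sound bookkeeping and is genuinely different from the paper's route, which instead processes the complete subgraph induced on the unlabeled set through a sequence of explicit switches (independent bichromatic edge pairs, bichromatic $3$-paths, then alternating $4$-cycles and, in one sub-case, a coordinated switch of six edges). The gap is at the center of your argument: the existence of a safe swap is asserted, not proved. The negation of ``a safe swap exists'' is the disjunction ``$N_0(v)\subseteq B_0$ \emph{or} every $1$-edge of $G-v$ meets $B_1$'' (plus the symmetric statement for the other swap type), and you analyze only the single sub-case in which $N_0(v)$ is entirely $0$-critical \emph{and} $N_1(v)$ is entirely $1$-critical. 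There your parity computation does check out: summing $1$-degrees over $G-v$ forces $e_f(1)=n(n-1)/4$, which is impossible when $m$ is odd. But it says nothing about the harder obstruction in which $B_1$ is merely a vertex cover of the $1$-edges of $G-v$. The cover bound you invoke gives only $|B_1|\geq (e_f(1)-m)/(m+1)\approx m$, and having about $m$ vertices with $d_1-d_0=2$ is perfectly consistent with the identity $\sum_w\bigl(d_1(w)-d_0(w)\bigr)=2\bigl(e_f(1)-e_f(0)\bigr)\in\{-2,0,2\}$, since the surplus can be absorbed by a few vertices of large deficiency; no contradiction follows from the counting you describe.

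The fallback ``two-step maneuver'' for $n\equiv 1\pmod 4$ is likewise unspecified: you do not say which preliminary swap to perform, why it exists in the obstructed configuration, or why the subsequent swap then reduces $|U|$ by at least two (a safe swap is only guaranteed to reduce it by one, namely by labeling $v$; it reduces it by more only if one of the other three endpoints happened to be unlabeled). This is exactly where the paper has to work hardest: its Steps 3 and 4 resolve the obstructed configurations by first switching an alternating $4$-cycle, which changes four edge labels while fixing every vertex label and thereby manufactures a usable edge for the next swap---a move entirely outside your single-pair-swap repertoire. The paper itself observes that direct counting in the spirit of your plan does close the question for larger $n$ (the inequalities it records after the proof for $n\geq 9$ and $n\geq 11$), so your approach is likely salvageable asymptotically; but $n=7$ is precisely where the counting is tight, and that is the case your sketch leaves open.
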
 
 
\begin{proof}
Let $G=K_n$ as in the statement of the theorem with an edge-friendly labeling $f$. Let $P$ be the set of unlabeled vertices, say $|P| = p$, and consider the induced complete subgraph $H = K_p$ on $P$. We provide a label switching algorithm that produces an edge-friendly relabeling of $G$ starting from $f$ such that all vertices are labeled. At each step of the algorithm the newly obtained labeling (that is, the relabeling) is still called $f$. The number of unlabeled vertices decreases and we remove any newly labeled vertices from $P$, and we will continue to call this new, smaller set of unlabeled vertices $P$. When $p=0$, there are no unlabeled vertices and the algorithm terminates, so assume $p > 0$.
 
\textbf{Step 1}: If $H$ does not contain a pair of independent edges with different labels, then go to Step 2. Otherwise, choose such a pair of edges in $H$ and switch the labels on these edges. The vertices incident with these edges are now labeled and the labels of the other vertices in $G$ have not changed. This reduces the order of $H$ by four and we repeat Step 1.
 
\textbf{Step 2}: If $H$ contains no pair of incident edges with different labels, then go to Step 3. Otherwise, choose a pair of incident edges in $H$ that have different labels. Such a pair of edges must form a $3$-path, say $xyz$. Switch the labels on $xy$ and $yz$. Vertices $x$ and $z$ are now labeled, but $y$ remains unlabeled, and the labels on the other vertices in $G$ have not changed, thus reducing the order of $H$ by two. Repeat Step 2.
 
\textbf{Step 3}: If $p=1$, then go to Step 4. Otherwise, $p \not= 0,1$, and all the edges of $H$ share the same label. Choose an edge $xy$ in $H$, and without loss of generality, suppose $f(xy)=1$. Since $x$ and $y$ are unlabeled, we can find a vertex $z$ in $G$ adjacent to $x$ so that $f(xz)=0$ and a vertex $w$ in $G$ adjacent to $y$ so that $f(yw)=0$. Note that since $n \geq 7$ we can choose $z$ and $w$ distinct.
 
Suppose $z$ or $w$ is trusty (unlabeled). Without loss of generality, assume $z$ is trusty (unlabeled) and switch the labels on edges $xz$ and $xy$. The labels on $x$ and $z$ do not change ($x$ does not change), but $f^+(y)=0$, reducing the order of $H$ by one (two). Repeat Step 3.   
  
Suppose that $z$ and $w$ are untrusty.

\textit{Case 1.} If $|N_0(y) \cap N_1(z)| \geq 1$, say $u \in N_0(y) \cap N_1(z)$, then the edges of the $4$-cycle $xyuzx$ are alternately labeled. Switching the labels of these edges does not change the vertex labels. However, under the new labeling, $f(xy)=0$. If $xy$ is not the only edge in $H$, we return to Step 1; otherwise, return the $4$-cycle to its original labeling.
 
Suppose $H$ contains only one edge: the edge $xy$. If $f(yz)=1$, then switch the labels on $xz$ and $yz$. Notice that $f^+(x)=1$ and $f^+(y)=0$ and we have no unlabeled vertices. If $f(yz)=0$, switch the labels on the edges of the $4$-cycle $xyuzx$ and then switch the labels on $xz$ and $yz$. This time $f^+(x)=0$ and $f^+(y)=1$ and we have no unlabeled vertices.
  
\textit{Case 2.} If $|N_0(y) \cap N_1(z)| = 0$, we choose a vertex $v \in N_1(z)$. Notice that $f(zw)=0$ and $f(yv)=1$, and recall that $w$ is untrusty. If $v$ is trusty, switch the labels on $zv$ and $xz$. The labels on $v$ and $z$ do not change but $f^+(x)=1$, and the number of unlabeled vertices decreases by $1$. If $v$ is untrusty, we switch the labels on the edges of the 4-cycle $ywzvy$ and notice that the switch does not change the labels on $y$ or $z$.
\begin{center}
\begin{tikzpicture}
\draw[line width=2.5pt, color=red] (0,1) -- (1,0);
\draw (0,1) -- (1,2);
\draw (1,0) -- (3,0);
\draw[line width=2.5pt, color=red] (1,2) -- (3,2);
\draw[line width=2.5pt, color=red] (1,0) -- (3,2);
\draw (1,2) -- (3,0);
\coordinate[label=right:$x$] (1) at (-.5,1);
\coordinate[label=right:$z$] (2) at (1,-.25);
\coordinate[label=right:$y$] (3) at (1,2.25);
\coordinate[label=right:$w$] (4) at (3,2.25);
\coordinate[label=right:$v$] (5) at (3,-.25);

\filldraw[black]
	(0,1) circle (2pt)
	(1,0) circle (2pt)
	(1,2) circle (2pt)
	(3,0) circle (2pt)
	(3,2) circle (2pt);
 
\node at (4.5,1) {$\longrightarrow$};
\node at (4.5,1.25) {\tiny\emph{switch}};

\draw[line width=2.5pt, color=red] (6,1) -- (7,0);
\draw (6,1) -- (7,2);
\draw[line width=2.5pt, color=red] (7,0) -- (9,0);
\draw (7,2) -- (9,2);
\draw (7,0) -- (9,2);
\draw[line width=2.5pt, color=red] (7,2) -- (9,0);
\coordinate[label=right:$x$] (1) at (5.5,1);
\coordinate[label=right:$z$] (2) at (7,-.25);
\coordinate[label=right:$y$] (3) at (7,2.25);
\coordinate[label=right:$w$] (4) at (9,2.25);
\coordinate[label=right:$v$] (5) at (9,-.25);

\filldraw[black]
	(6,1) circle (2pt)
	(7,0) circle (2pt)
	(7,2) circle (2pt)
	(9,0) circle (2pt)
	(9,2) circle (2pt);
\end{tikzpicture}
\end{center}  
   
\begin{itemize}
	\item If the labels on $v$ and $w$ do not change after the switch, then $v$ and $w$ both become trusty, and we can switch the labels on $xy$ and $yv$. Therefore, $f^+(x)=0$ and $y$ is unlabeled. If $xy$ is the last edge in $H$, then we continue to Step 4; otherwise, the edges of $H$ do not all share the same labels under this labeling and we return to Step 1.
	
	\item Suppose without loss of generality that the label on $w$ does not change after the switch, but the label on $v$ changes. This means that $f^+(w)=1$. Switch the edge labels on the $4$-cycle $ywzvy$ back to their original state and then switch the labels on $xy$ and $yw$, making $f^+(x)=0$ and leaving the other vertex labels unchanged. Repeat Step 3.
  
  \item Suppose that the labels on $w$ and $v$ change after the switch. Since the degree of $w$ is even, $\Big||N_0(w)| - |N_1(w)|\Big|=2$, so $w$ and $v$ remain untrusty. We have switched the labels of two edges incident with $w$ from $0$ to $1$ and two edges incident with $v$ from $1$ to $0$, so we know that after the switch, $f^+(w)=1$ and $f^+(v)=0$. Switch the labels on the edges of the $4$-cycle $ywzvy$ back to the original labeling and notice that $f^+(w)=0$ and $f^+(v)=1$. If $f(vw)=1$, then switching the labels on $yw$, $wv$, $vz$, and $zx$ does not change the labels on $w$ or $z$ but does change the labels on $v$, $x$, and $y$ so that $f^+(v)=0$ and $f^+(x)=f^+(y)=1$. If $f(vw)=0$, then switch the labels on $yw$, $wv$, $wz$, $xy$, $yv$, and $vz$ and notice that under this new labeling, $f^+(w)=1$, $f^+(x)=f^+(y)=0$, $f^+(z)$ remains unchanged, and $v$ becomes unlabeled. Hence, the set of unlabeled vertices is reduced by one, and we return to Step 1.
\end{itemize}
  
\textbf{Step 4}: Since $p=1$, let $v$ be the unique vertex of $H$. Choose vertices $u_1, u_2 \in N_0(v)$. If $f(u_1u_2)=1$, and $u_1$ is trusty or $f^+(u_1)=0$, switch the labels on $vu_2$ and $u_1u_2$. Then the labels on $u_1$ and $u_2$ do not change, but $v$ is now labeled $1$. By analogy we can consider the case when $v_1, v_2 \in N_1(v)$.
  
Consider the case when $u_1, u_2 \in N_0(v)$, $v_1, v_2 \in N_1(v)$, $f(u_1u_2)=1$, $f(v_1v_2)=0$, $u_1$, $u_2$, $v_1$, $v_2$ are untrusty, $f^+(u_1)=f^+(u_2)=1$, and $f^+(v_1)=f^+(v_2)=0$. We switch the labels on $vu_1$ and $vv_1$ making $u_1$ and $v_1$ trusty. By switching the labels on $vu_2$ and $u_1u_2$, we obtain $f^+(v)=1$ and none of the labels on all other vertices in $G$ change. 
\begin{center}
\begin{tikzpicture}
\draw (0,1) -- (1,0);
\draw[line width=2.5pt, color=red] (0,1) -- (1,2);
\draw (0,1) -- (2,0);
\draw (1,2) -- (2,2);
\draw[line width=2.5pt, color=red] (0,1) -- (2,2);
\draw[line width=2.5pt, color=red] (1,0) -- (2,0);
\coordinate[label=right:$v$] (1) at (-.5,1);
\coordinate[label=right:$v_1$] (2) at (1,-.25);
\coordinate[label=right:$u_1$] (3) at (1,2.25);
\coordinate[label=right:$u_2$] (4) at (2,2.25);
\coordinate[label=right:$v_2$] (5) at (2,-.25);

\draw[black]
 	(0,1) circle (4pt);
 
\filldraw[black]
	(1,2) circle (2pt)
	(2,2) circle (2pt);

\filldraw[red]
	(1,0) circle (4pt)
	(2,0) circle (4pt);

\node at (3.5,1) {$\longrightarrow$};
\node at (3.5,1.25) {\tiny\emph{switch}};
 
\draw [line width=2.5pt, color=red] (5,1) -- (6,0);
\draw (5,1) -- (6,2);
\draw[line width=2.5pt, color=red] (6,0) -- (7,0);
\draw (6,2) -- (7,2);
\draw[line width=2.5pt, color=red] (5,1) -- (7,2);
\draw (5,1) -- (7,0);
\coordinate[label=right:$v$] (1) at (4.5,1);
\coordinate[label=right:$v_1$] (2) at (6,-.25);
\coordinate[label=right:$u_1$] (3) at (6,2.25);
\coordinate[label=right:$u_2$] (4) at (7,2.25);
\coordinate[label=right:$v_2$] (5) at (7,-.25);

\draw[black]
 	(5,1) circle (4pt);

\filldraw[black]
	(6,2) circle (2pt)
	(7,2) circle (2pt);

\filldraw[red]
	(6,0) circle (4pt)
	(7,0) circle (4pt);

\node at (8.5,1) {$\longrightarrow$};
\node at (8.5,1.25) {\tiny\emph{switch}};

\draw[line width=2.5pt, color=red] (10,1) -- (11,0);
\draw (10,1) -- (11,2);
\draw[line width=2.5pt, color=red] (11,0) -- (12,0);
\draw[line width=2.5pt, color=red] (11,2) -- (12,2);
\draw (10,1) -- (12,2);
\draw (10,1) -- (12,0);
\coordinate[label=right:$v$] (1) at (9.5,1);
\coordinate[label=right:$v_1$] (2) at (11,-.25);
\coordinate[label=right:$u_1$] (3) at (11,2.25);
\coordinate[label=right:$u_2$] (4) at (12,2.25);
\coordinate[label=right:$v_2$] (5) at (12,-.25);

\filldraw[black]
 	(10,1) circle (2pt);

\filldraw[black]
	(11,2) circle (2pt)
	(12,2) circle (2pt);

\filldraw[red]
	(11,0) circle (4pt)
	(12,0) circle (4pt);
\end{tikzpicture}
\end{center}

The following argument shows that if such $u_1$, $u_2$, $v_1$, and $v_2$ with $f(u_1u_2)=1$ and $f(v_1v_2)=0$ do not exist, then we may switch the labels on the appropriate edges so that the labeling remains edge-friendly and $v$ becomes labeled. Let $A_i$ be the induced subgraph on $N_i(v)$ with the induced labeling from $G$. We first bound the number of $0$-edges from $A_0$ to $A_1$; in particular, we show that if there are many such edges, then some vertex in $A_0$ must be trusty, and we can perform switches to label $v$. Suppose $u_1 \in A_0$ and $v_1,v_2 \in A_1$ and $f(u_1v_1) = f(u_1v_2) = 0$. If $A_0$ contains a $1$-edge or $A_1$ contains a $0$-edge, then by the above argument, we can make the necessary switches to label $v$ since we only have the cases from the beginning of Step 4. Hence, assume to the contrary that every edge in $A_0$ is a $0$-edge, which implies $|N_0(u_1)| \geq \frac{n-1}{2}+2$; in other words, $u_1$ is trusty. Switch the labels on $u_1v_1$ and $vv_1$. Then the labels on $u_1$ and $v_1$ do not change, but $f^+(v)=0$. Thus, we need only consider the case when the maximum number of $0$-edges from any vertex in $A_0$ to $A_1$ is $1$. The same argument holds for $1$-edges from $A_1$ to $A_0$. Hence, for all graphs with an edge-friendly labeling and more than $2|A_0|$ edges between $A_0$ and $A_1$ we can find $u_1$, $u_2$, $v_1$, and $v_2$ so that $u_1, u_2 \in N_0(v)$, $v_1, v_2 \in N_1(v)$, $f(u_1u_2)=1$, and $f(v_1v_2)=0$, or $u_1$ is trusty and we can switch edges appropriately to label $v$. Since the number of edges between $A_0$ and $A_1$ is $|A_0|^2 > 2|A_0|$ for $|A_0| > 2$, we have shown the result for $n\geq 7$. \qed
\end{proof}
\\[\baselineskip]  
To see that the cases $n=3$ and $n=5$ must be excluded from Theorem~\ref{all-labeled}, notice that for $A_0$ (as defined in the above proof), $|A_0|^2 > 2|A_0|$ except when $|A_0| \leq 2$, which can occur only if $G=K_3$ or $G=K_5$. 

Any edge-friendly labeling of $K_3$ induces a partial vertex labeling in which two vertices always remain unlabeled. Let $v$ be a vertex in $K_5$ with an edge-friendly labeling $f$ and $i\in \mathbb{Z}_2$. If $|N_i(v)|=4$, then we delete $v$ and its incident edges from $K_5$. In the remaining $K_4$, we have exactly one edge $e$ with $f(e)=i$, and the vertices incident with $e$ in $K_5$ are unlabeled. If $|N_i(v)|=2$, then $v$ is unlabeled. Therefore assume there is no vertex $v$ in $K_5$ such that $|N_i(v)|=4$ or $|N_i(v)|=2$; that is, assume $|N_i(v)|=1$ or $|N_i(v)|=3$ for every vertex $v$ in $K_5$. However, every vertex in $K_5$ has degree $4$ and $f$ is an edge-friendly labeling, which contradicts the assumption.    
  
Let $v$ be any unlabeled vertex in a graph $G$ and for $i \in \mathbb{Z}_2$ let $A_i$ be defined as in the above proof. Then the following calculation may be effective for lower density graphs:  
\begin{align}
\nonumber 
\left| 0\mbox{-edges in } A_1 \right| 
&= \left| 0\mbox{-edges in } G \right| - \left| 0\mbox{-edges in } A_0 \right| \\
&\quad - \left| 0\mbox{-edges incident to } v \right| - \left| 0\mbox{-edges between } A_0 \mbox{ and } A_1 \right| \label{eqn:eqn1}
\end{align}
Applying~(\ref{eqn:eqn1}) to $G=K_n$ with $n\geq 7$, we obtain
\begin{enumerate}
	\item For $n \geq 9$ and $n \equiv 1 \pmod 4$,
		\begin{align}
  		\nonumber 
  		\left| 0\mbox{-edges in } A_1 \right| 
  		&\geq \frac{{n\choose 2}}{2} - {{\frac{n-1}{2}}\choose{2}} - \frac{n-1}{2} - {\frac{n-1}{2}} \\ 
  		&\geq 1.
  			\label{eqn:eqn2}
  	\end{align}
  
  \item For $n \geq 11$ and $n \equiv 3 \pmod 4$,
		\begin{align}
  		\nonumber 
  		\left| 0\mbox{-edges in } A_1 \right| 
  		&\geq \frac{{n\choose 2}-1}{2} - {{\frac{n-1}{2}}\choose{2}} - \frac{n-1}{2} - {\frac{n-1}{2}} \\ 
  		&\geq 1.
  			\label{eqn:eqn3}
  	\end{align}	
\end{enumerate}
The same calculation holds for the $1$-edges in $A_0$. Hence we can always find the desired $u_1$, $u_2$, $v_1$, and $v_2$ required in Step 4.
  
Notice that the above argument could be applied without Steps 1 through 3 for graphs of large enough order and high enough density, where the degree of the unlabeled vertices are large. This reasoning may lead to further asymptotic results.
 
We state some general questions: 
\begin{enumerate}
	\item Which graphs are opinionated?
	
	\item Classify constants $c$ and graphs $G$ which admit edge-friendly labelings so that the induced vertex labeling produces no more than $c$ unlabeled vertices.
\end{enumerate}

\section{Acknowledgements}

We would like to thank the anonymous referees for their valuable and insightful comments which greatly improved the arguments and presentation of this paper.
 
\bibliographystyle{plain}

\end{document}